\title{The special linear group for nonassociative rings}
\author{Harry Petyt}
\newcounter{claimcount}
\newtheorem{theorem}{Theorem}[section]
\newtheorem{lemma}[theorem]{Lemma}
\newtheorem{corollary}[theorem]{Corollary}
\theoremstyle{definition}
\newtheorem{definition}[theorem]{Definition}
\theoremstyle{remark}
\newtheorem{remark}[theorem]{Remark}
\newenvironment{claim}[1]{\stepcounter{claimcount}\par\noindent\underline{Claim:}\space#1}{}
\newenvironment{claimproof}[1]{\par\noindent\underline{Proof:}\space#1}{\leavevmode\unskip\penalty9999 \hbox{}\nobreak\hfill\quad\hbox{$\diamondsuit$}\vspace{2mm}}
\newcommand*{\C}{\mathbb{C}}
\newcommand*{\F}{\mathbb{F}}
\newcommand*{\HH}{\mathbb{H}}
\newcommand*{\mfh}{\mathfrak{h}}
\newcommand*{\mfM}{\mathfrak{M}}
\newcommand*{\OO}{\mathbb{O}}
\newcommand*{\R}{\mathbb{R}}
\newcommand*{\Z}{\mathbb{Z}}
\newcommand*{\msl}{\mathfrak{sl}}
\newcommand{\vcc}{\hspace{1mm}\vcentcolon\hspace{1mm}}
\DeclareMathOperator{\tr}{tr}
\DeclareMathOperator{\real}{Re}
\DeclareMathOperator{\SL}{SL}
\DeclareMathOperator{\End}{End}
\begin{document}

\maketitle

\begin{abstract}
We extend to arbitrary rings a definition of the octonion special linear group due to Baez. At the infinitesimal level we get a Lie ring, which we describe over some large classes of rings, including all associative rings and all algebras over a field. As a corollary we compute all the groups Baez defined.
\end{abstract}

\section{Introduction}

The special linear groups $\SL_2(\R)$ and $\SL_2(\C)$ are, respectively, the double covers of $\mathrm{SO}_0(2,1)$ and $\mathrm{SO}_0(3,1)$ -- the isometry groups of the hyperbolic plane and hyperbolic 3-space. The pattern continues with the quaternions $\HH$, as shown by Kugo and Townsend in \cite{kugotownsend}, and Sudbery deals with the final normed real division algebra, the octonions $\OO$, in \cite{sudbery}. Unfortunately the way Sudbery defines the special linear group over $\OO$ only makes sense in dimensions two and three. 

In his celebrated survey \cite{baez}, Baez suggests a unified definition of $\SL_m(\OO)$ for all $m$, and shows that it agrees with Sudbery's definition when $m=2$. He does not discuss the case $m>2$, and it seems that until now no further investigation has been made. 

Motivated by this, in Section 2 we reformulate Baez's definition of the special linear group and algebra in a natural way that lends itself to computation, and note that it naturally extends to arbitrary nonassociative rings (in the present paper, we do not in general assume rings to be associative). We then determine the corresponding special linear ring (we do not necessarily get an algebra structure) for all associative rings. In Section 3 we cover the two dimensional case for unital real composition algebras. In Section 4 we characterise $\SL_m$, with $m>2$, for a large class of algebras that includes $\OO$. This allows us to compute Baez's groups. In doing so, we find that in three dimensions his definition disagrees with Sudbery's, which gives a real form of the exceptional Lie group $E_6$. 

An alternative definition for $\SL_2(\OO)$ has been proposed by Hitchin \cite{hitchin}. This definition is motivated by a dimension argument, and does not give a Lie group.

I would like to thank Dmitriy Rumynin for introducing me to the problem and for his helpful comments and suggestions.

\section{Preliminaries}

\subsection{}

A \emph{composition algebra} is a not necessarily unital or associative algebra $C$ over a field $\F$, together with a nondegenerate quadratic form $|\cdot|^2$ that is multiplicative in the sense that $|zw|^2=|z|^2|w|^2$. Such algebras come with an anti-involution, which we call \emph{conjugation} and denote by a bar, e.g. $\bar z$. They are also necessarily alternative. That is, the \emph{associator} $[\cdot,\cdot,\cdot]:C^3\rightarrow C$, given by $[z,w,u]=(zw)u-z(wu)$, is alternating.

If the characteristic of $\F$ is not two, then all unital composition algebras can be obtained from $\F$ by the Cayley--Dickson construction (a description of which can be found in \cite{schafer}), and have famously been classified by Jacobson \cite{jacobson}. We are mainly interested in real composition algebras, and we state his classification in this case.

\begin{theorem}[Jacobson]\label{composition}
The unital real composition algebras are exactly: \\
\begin{tabular}{ll}
(i) $\R$ & \\
(ii) $\C$ & (iii) $\R^2$, with quadratic form $|(a,b)|^2=ab$ \\
(iv) $\HH$  & (v) \hspace{1mm}$M_2(\R)$, the $2\times2$ matrices over $\R$, with $|\cdot|^2=\det$ \\
(vi) $\OO$ \hspace{10mm} & (vii) The split octonions $\OO'$
\end{tabular}
\end{theorem}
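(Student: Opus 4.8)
The plan is to run the Cayley--Dickson doubling process and classify by dimension. First I would record the standard structural facts: in a unital composition algebra the conjugation is $\bar z = t(z)1 - z$, where $t$ is the linearization of $|\cdot|^2$ at the identity, and it satisfies $z\bar z = \bar z z = |z|^2 1$; in particular $|1|^2 = 1$, so the norm form represents $1$. The essential input is the doubling lemma (available from the construction in \cite{schafer}): if $B\subsetneq C$ is a proper unital subalgebra on which $|\cdot|^2$ is nondegenerate, then any $j\perp B$ with $|j|^2\neq 0$ generates a subalgebra $B\oplus Bj$ isomorphic to the Cayley--Dickson double of $B$ with parameter $\gamma := -|j|^2$, whose norm form is $\langle\langle\gamma\rangle\rangle\otimes n_B$, where $n_B$ denotes the norm form of $B$.

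Next I would bound the dimension. Since $|\cdot|^2$ is nondegenerate on $C$, starting from $\R\cdot 1$ one iterates the doubling lemma until the subalgebra fills up $C$, so $\dim_\R C$ is a power of $2$. The double of $B$ is commutative only if $B=\R$, associative only if $B$ is commutative and associative, and alternative only if $B$ is associative. Because a composition algebra must be alternative, doubling a nonassociative (hence alternative, nonassociative) algebra would destroy alternativity and so cannot yield a composition algebra; thus the process halts after at most three steps and $\dim_\R C\in\{1,2,4,8\}$.

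I would then enumerate over $\R$. Doubling a division composition algebra (definite norm form) yields the definite algebra when the new form stays definite and the split algebra when it becomes indefinite, while doubling a split algebra always yields a split algebra, since $\langle\langle\gamma\rangle\rangle\otimes n_B$ is isotropic as soon as $n_B$ is. Starting from $\R$ this gives $\C$ and $\R^2$ in dimension $2$; doubling $\C$ gives $\HH$ and $M_2(\R)$, and doubling $\R^2$ again gives $M_2(\R)$, so dimension $4$ contributes only $\HH$ and $M_2(\R)$; doubling $\HH$ gives $\OO$ and $\OO'$, while doubling $M_2(\R)$ gives $\OO'$, so dimension $8$ contributes only $\OO$ and $\OO'$. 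This produces exactly the seven algebras listed.

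The crux, and the step I expect to cause the most trouble, is justifying that the split cases genuinely coincide and that nothing is double-counted: that a unital composition algebra is determined up to isomorphism by its norm form, and that in each dimension $\geq 2$ there are exactly two such forms over $\R$. The clean way to handle the forms is that the norm form of a composition algebra is a Pfister form, and Pfister forms are round, so an isotropic one is hyperbolic; over $\R$ an indefinite form is automatically isotropic, whence in each rank $2^n$ with $n\geq 1$ the only options are the positive definite form (giving the division algebras $\C,\HH,\OO$) and the hyperbolic form (giving the split algebras $\R^2, M_2(\R), \OO'$), with only $\langle 1\rangle$ in rank $1$. Matching these forms against the explicit algebras above, using that isometric norm forms force isomorphic composition algebras, completes the classification.
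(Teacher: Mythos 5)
The paper does not actually prove this statement: it is quoted as Jacobson's classification, with citations to \cite{jacobson} and to \cite{schafer} for the Cayley--Dickson construction, so there is no in-paper argument to compare yours against. Judged on its own terms, your outline is the standard proof of the classical theorem: the doubling lemma producing $B\oplus Bj$ as a Cayley--Dickson double inside $C$, termination of the process at dimension $8$ because the double of a nonassociative algebra is no longer a composition algebra, and reduction of the enumeration to the classification of Pfister forms over $\R$. The logic is sound and nothing is double-counted, \emph{provided} you really have the two black boxes you name: (a) a unital composition algebra is determined up to isomorphism by its norm form, and (b) an isotropic Pfister form is hyperbolic, so that over $\R$ an $n$-fold Pfister form of rank $\geq 2$ is either positive definite or hyperbolic (note that the norm form represents $1=|1|^2$, which is what rules out the negative definite option). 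Both are standard but neither is trivial, and (a) is the real content of Jacobson's theorem; in a written-out proof you would either cite it or prove it, e.g.\ by a Witt-extension argument showing that an isometry of norm forms carrying suitable subalgebras to subalgebras extends to an algebra isomorphism. Two smaller points to make explicit: the doubling chain is also what forces $C$ to be finite dimensional in the first place (otherwise one could double an eight-dimensional subalgebra inside $C$ and contradict alternativity), so the phrase ``until the subalgebra fills up $C$'' is quietly doing that work; and the existence of an anisotropic vector $j\perp B$ needs the observation that the form is nondegenerate, hence nonzero, on $B^\perp$.
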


We write $1,e_1,\dots,e_{d-1}$ for an orthonormal basis of a unital composition algebra of dimension $d$. We then have $e_i^2=\pm1$ for all $i$, and $e_ie_j=-e_je_i$ whenever $i\neq j$. Letting $L_z:C\rightarrow C$ denote the left multiplication map $w\mapsto zw$, alternativity of $C$ gives us 
\begin{align}
L_{e_i}L_{e_i}=L_{e_i^2}=\pm1  	\label{squaresto}
\end{align}
Moreover, whenever $e_i\neq e_j$ we have
\begin{align*}
0=[e_i,e_j,z]+[e_j,e_i,z] & =L_{e_ie_j+e_je_i}(z)-(L_{e_i}L_{e_j}+L_{e_j}L_{e_i})(z)
\end{align*}
and hence
\begin{align}
L_{e_i}L_{e_j}=-L_{e_j}L_{e_i} \label{anticommutativity}
\end{align}

Let $\R^{p,q}$ denote $\R^{p+q}$ with the standard quadratic form of signature $(p,q)$. The algebras in the left-hand column of Theorem~\ref{composition} have signature $(d,0)$, and those in the right-hand column have signature $(\frac{d}{2},\frac{d}{2})$.

\subsection{}

For a ring $R$ we write $M_m(R)$ to mean the space of $m\times m$ matrices with entries in~$R$, and $E_{ij}$ for an element of the standard basis. The trace of a matrix $x$ is written $\tr x$, and left multiplication maps are again denoted $L_x$. The definition of the octonion special linear group and algebra given by Baez is as follows \cite[p.177]{baez}.

\begin{definition}[Baez]\label{baezdefinition}
The octonion special linear algebra $\msl_m(\OO)$ is the Lie algebra generated under commutators by the set $\left\{L_x : x\in M_m(\OO), \hspace{1mm}\tr x=0 \right\}$. The octonion special linear group $\SL_m(\OO)$ is the Lie group generated by exponentiating $\msl_m(\OO)$.
\end{definition}

This definition is not well suited to computation, and we prefer to use the following, which is easily seen to agree with Definition~\ref{baezdefinition} in the case $R=\OO$.

\begin{definition}\label{generaldefinition}
For $R$ a not necessarily associative or unital ring, $\msl_m(R)$ is the ring generated by $\left\{L_{aE_{ij}} : a\in R, \hspace{1mm} i\neq j \right\}$ under commutators. Similarly, $\SL_m(R)$ is the group generated by $\left\{L_{I+aE_{ij}} : a\in R, \hspace{1mm}i\neq j \right\}$ under composition.
\end{definition}

Straight from the definition we can obtain a nice description of the special linear algebra of an associative ring.

\begin{theorem}\label{associativecase}
Let $R$ be an associative (not necessarily unital) ring. Then there is an isomorphism $\msl_m(R)\cong\lbrace x\in M_m(R) \hspace{1mm} \vcentcolon \hspace{1mm} \tr x\in \left[ R,R \right] \rbrace$
\end{theorem}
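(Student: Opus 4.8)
The plan is to exploit associativity to turn the abstract operator Lie ring $\msl_m(R)$ into a concrete Lie subring of matrices. Since $R$ is associative, matrix multiplication in $M_m(R)$ is associative, so $L_xL_y=L_{xy}$, and hence the operator commutator satisfies $[L_x,L_y]=L_{xy}-L_{yx}=L_{[x,y]}$. Thus $x\mapsto L_x$ is a homomorphism of Lie rings from $(M_m(R),[\cdot,\cdot])$ to $\End(M_m(R))$, and $\msl_m(R)=L(\mathfrak{g})$, where $\mathfrak{g}$ is the Lie subring of $M_m(R)$ generated by $\{aE_{ij}:a\in R,\ i\neq j\}$. The candidate isomorphism is then $L_x\mapsto x$, and the problem splits into computing $\mathfrak{g}$ and checking that $L$ is injective on it.

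Set $T=\{x\in M_m(R):\tr x\in[R,R]\}$. First I would show $\mathfrak{g}\subseteq T$. The generators $aE_{ij}$ with $i\neq j$ are traceless, so it suffices to check that $T$ is a Lie subring containing them, and for this the key computation is that for arbitrary $x,y\in M_m(R)$ one has
\[
\tr[x,y]=\sum_{i,k}\bigl(x_{ik}y_{ki}-y_{ki}x_{ik}\bigr)=\sum_{i,k}[x_{ik},y_{ki}]\in[R,R].
\]
Since $T$ is plainly an additive subgroup, this makes it a Lie subring, so every iterated commutator of the generators lands in $T$, giving $\mathfrak{g}\subseteq T$.

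For the reverse inclusion $T\subseteq\mathfrak{g}$ I would treat the off-diagonal and diagonal parts separately. Any strictly off-diagonal matrix is a finite sum of generators $aE_{ij}$, so lies in $\mathfrak{g}$; thus it remains to show that every diagonal matrix $D=\mathrm{diag}(d_1,\dots,d_m)$ with $\sum_i d_i\in[R,R]$ lies in $\mathfrak{g}$. The basic building block is
\[
[aE_{ij},bE_{ji}]=abE_{ii}-baE_{jj}\qquad(i\neq j),
\]
which lies in $\mathfrak{g}$. Using these (in the presence of a unit one gets the cleaner $aE_{ii}-aE_{jj}$) one first produces all diagonal matrices of trace zero, and then, writing a prescribed trace value $c=\sum_\alpha(p_\alpha q_\alpha-q_\alpha p_\alpha)\in[R,R]$, one realises a diagonal matrix of trace exactly $c$ as $\sum_\alpha(p_\alpha q_\alpha E_{11}-q_\alpha p_\alpha E_{22})$; subtracting it from $D$ reduces to the trace-zero case. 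Assembling these pieces gives $D\in\mathfrak{g}$ and hence $T\subseteq\mathfrak{g}$.

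The final step is to identify $L(\mathfrak{g})$ with $\mathfrak{g}$, i.e. to verify that $L$ is injective on $\mathfrak{g}$, so that $L_x\mapsto x$ is a well-defined isomorphism $\msl_m(R)\xrightarrow{\sim}T$. When $R$ is unital this is immediate, since $L_x(I)=x$ recovers $x$ from $L_x$. I expect the main obstacle to be exactly the interplay with non-unitality: both the realisation of diagonal matrices in the previous paragraph and the faithfulness of $L$ are transparent once a unit is available, so the delicate part of the argument is to control these using only the genuinely available products of ring elements. Once injectivity of $L|_{\mathfrak{g}}$ is secured, combining it with $\mathfrak{g}=T$ yields the stated isomorphism.
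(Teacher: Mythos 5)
Your strategy is the same as the paper's: use associativity to identify $\msl_m(R)$ with the Lie subring of $M_m(R)$ generated by the matrices $aE_{ij}$, note that the strictly off-diagonal matrices are the additive span of the generators, compute $[aE_{ij},bE_{kl}]=\delta_{jk}abE_{il}-\delta_{il}baE_{kj}$ so that the only genuinely new elements are the diagonal ones $abE_{ii}-baE_{jj}$, and match traces against $[R,R]$. Your observation that $\tr[x,y]=\sum_{i,k}[x_{ik},y_{ki}]\in[R,R]$ for \emph{all} $x,y\in M_m(R)$ is a slightly cleaner route to the forward inclusion than the paper's remark that commutators of the diagonal elements with generators are traceless, but it is the same computation.

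The two points you flag as delicate in the non-unital case are genuine gaps, and your proof as written does not close them. Every element of the Lie subring $\mathfrak{g}$ generated by the $aE_{ij}$ has all of its diagonal entries in the additive subgroup $R^2$ generated by products (generators have zero diagonal, and entries of commutators are sums of products), so a matrix such as $\mathrm{diag}(d,-d,0,\dots,0)$ with $d\notin R^2$ lies in $T$ but not in $\mathfrak{g}$; your step ``first produce all diagonal matrices of trace zero'' implicitly uses $[dE_{12},1\cdot E_{21}]=dE_{11}-dE_{22}$ and hence the unit. Likewise $L$ fails to be injective on $\mathfrak{g}$ whenever some nonzero $a\in R$ satisfies $aR=0$, since then $L_{aE_{12}}=0$; in the extreme case of a ring with zero multiplication, $\msl_m(R)=0$ while the right-hand side of the theorem is the full set of trace-zero matrices. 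You should know that the paper's own proof makes exactly the same silent identifications (``we can identify $L_{aE_{ij}}$ with the matrix $aE_{ij}$'', ``varying $i$ and $j$ gives the right hand side''), so for unital $R$ your argument is complete and essentially identical to the paper's; but neither argument, as written, establishes the statement for arbitrary non-unital associative rings, and the zero-multiplication example shows that some extra hypothesis (unitality, or $R^2=R$ together with trivial left annihilator) is genuinely needed rather than a mere convenience.
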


\begin{proof}
Since $R$ is associative, we have $L_aL_b=L_{ab}$ for all $a,b\in R$, so we can identify $L_{aE_{ij}}$ with the matrix $aE_{ij}$ and consider $\msl_m(R)\subset M_m(R)$. Now $\msl_m(R)$ contains all matrices with all diagonal entries zero, as these form the linear span of the generators. Furthermore, the commutator of two generators is 	$\left[ aE_{ij},bE_{kl} \right] = \delta_{jk}abE_{il}-\delta_{il}baE_{kj}$, where $\delta$ denotes the Kronecker delta. If $\delta_{jk}$ and $\delta_{il}$ are not both $1$ then we get either zero or a generator. If both are 1 then we get $abE_{ii}-baE_{jj}$. Clearly this has trace lying in $\left[ R,R\right]$, and by varying $a$ and $b$ we can get the whole of $[R,R]$. Then varying $i$ and $j$ gives the right hand side of the result. Note that the commutator of such a diagonal matrix with a generator is traceless, so all further commutators have trace in $[R,R]$, and we are done.
	%Note that taking further commutators does not give anything new. To see this, consider 
	%\[
	%\left[ abE_{ii}-baE_{jj}, cE_{kl} \right] =
	%\delta_{ik}abcE_{il} -\delta_{il}cabE_{ki} 
	%-\delta_{jk}bacE_{jl} + \delta_{jl}cbaE_{kj}
	%\]
	%Since $i\neq j$ and $k\neq l$, the Kronecker deltas ensure that any diagonal terms are zero, so it is a linear combination of generators, and we conclude:
	%\begin{align*}
	%\msl_m(R)&\cong\text{span}\left\{ aE_{ij}, abE_{ii}-baE_{jj} \hspace{1mm}\vcentcolon\hspace{1mm} a,b\in R,i\neq j\right\} \\
	%&=\lbrace x\in M_m(R) \hspace{1mm} \vcentcolon \hspace{1mm} \text{tr}x\in \left[ R,R \right] \rbrace
	%\end{align*}
\end{proof}

Theorem \ref{associativecase} shows that Definition~\ref{generaldefinition} gives a true generalisation of the usual special linear algebra, for if $R$ is a field then $[R,R]=0$, and, moreover, if $R=\HH$ then $[\HH,\HH]=\{z\in\HH:\real(z)=0\}$, which gives the standard definition of $\msl_m(\HH)$ \cite[p.52]{harvey}.

\section{The Two Dimensional Case}

Let $C$ be a unital real composition algebra. For $x=(x_{ij})\in M_m(C)$, the \emph{hermitian conjugate} of $x$ is $x^*=(\overline{x_{ji}})$. If $x^*=x$ then $x$ is said to be hermitian, and the set of such matrices is denoted $\mfh_m(C)$. Note that all diagonal entries of a hermitian matrix lie in $\R$. We restrict our attention to the case $m=2$, where alternativity of $C$ ensures that the determinant map $x\mapsto x_{11}x_{22}-x_{12}\overline{x_{12}}$ is a well defined quadratic form on $\mfh_2(C)$ (see \cite[p.176]{baez}). 

If $C$ has dimension $d$ and signature $(p,q)$, then writing $z=z_0+z_1e_1+\dots+z_{d-1}e_{d-1}$ for an element of $C$, we have that $\mfh_2(C)$ is isometric to $\R^{q+1,p+1}$ via the map 
\[
\left( 
\begin{array}{cc}
r & z \\
\bar{z} & s \end{array}\right) \longmapsto 
\left(\frac{r+s}{2}, \frac{r-s}{2}, z_0, z_1,\dots,z_{d-1}\right)
\]

We now define a representation of $\SL_2(C)$ on $\mfh_2(C)$. Let $y=L_{I+aE_{ij}}$ be a generator of $\SL_2(C)$, and for $x\in\mfh_2(C)$ set $y\cdot x=(I+aE_{ij})x(I+\bar{a}E_{ji})$. This product is well defined because $C$ is alternative, and we extend to $\SL_2(C)$ in the obvious way.

\begin{lemma}
The action of $\SL_2(C)$ on $\mfh_2(C)$ is by isometries. That is, if $x\in\mfh_2(C)$ and $y\in\SL_2(C)$ then $\det(y\cdot x)=\det x$.
\end{lemma}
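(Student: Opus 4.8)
The plan is to reduce the statement to the generators of $\SL_2(C)$ and then verify the isometry property by a direct, if careful, matrix computation. Since $\SL_2(C)$ is generated by the maps $y=L_{I+aE_{ij}}$ and the action is extended to the whole group by composing the actions of generators, it suffices to check that each generator acts by an isometry: composites and inverses of determinant-preserving maps again preserve the determinant. When $m=2$ there are only two families of generators, indexed by $(i,j)=(1,2)$ and $(i,j)=(2,1)$, and transposition interchanges them, so I would treat $g=I+aE_{12}$ and recover the other case by symmetry.

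For $g=I+aE_{12}$ we have $g^*=I+\bar aE_{21}$, and for $x=\left(\begin{smallmatrix} r & z \\ \bar z & s\end{smallmatrix}\right)\in\mfh_2(C)$, so that $r,s\in\R$, I would simply expand the two matrix products in $y\cdot x=gxg^*$. The point that legitimises this despite nonassociativity is Artin's theorem: the subalgebra of $C$ generated by the two elements $a$ and $z$ is associative, and it contains $\bar a$ and $\bar z$ as well (each conjugate is a real combination of $1$ and the element), so every product occurring in $gxg^*$ may be manipulated exactly as in an associative ring. This is also precisely what makes the triple product unambiguous, confirming the well-definedness noted before the statement. The computation then yields off-diagonal entry $z+as$ and diagonal entries $s$ and $r+a\bar z+z\bar a+s|a|^2$, so that
\[
\det(y\cdot x)=s\bigl(r+a\bar z+z\bar a+s|a|^2\bigr)-|z+as|^2 .
\]

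The last step is to expand the norm using $s\in\R$, the identity $a\bar a=|a|^2$, and the polarisation of $|\cdot|^2$: explicitly $|z+as|^2=|z|^2+s(a\bar z+z\bar a)+s^2|a|^2$, where $a\bar z+z\bar a=2\real(a\bar z)$ is real. The terms $s(a\bar z+z\bar a)$ and $s^2|a|^2$ then cancel against the corresponding terms from the first summand, leaving exactly $rs-|z|^2=rs-z\bar z=\det x$. I expect the only genuine obstacle to be the bookkeeping forced by nonassociativity; the appeal to Artin's theorem disposes of it by confining the entire calculation to a single associative subalgebra, after which the argument uses only the standard composition-algebra identities $z\bar z=|z|^2$ and $u\bar v+v\bar u=2\real(u\bar v)$.
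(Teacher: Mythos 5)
Your proposal is correct and follows essentially the same route as the paper: reduce to the generators $L_{I+aE_{ij}}$, expand the triple product for one of the two (symmetric) cases, and cancel using $w\bar w=|w|^2$ and the realness of $a\bar z+z\bar a$. The only cosmetic differences are that you treat $I+aE_{12}$ where the paper treats $I+aE_{21}$, and that you justify the unambiguity of $gxg^*$ by Artin's theorem on the subalgebra generated by $a$ and $z$, whereas the paper cites alternativity directly; both are valid.
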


\begin{proof}
It suffices to show that this holds for generators of $\SL_2(C)$. The two cases are similar, so we just do $y=L_{I+aE_{21}}$. Let $x=\left(\begin{array}{cc}r&z\\\bar{z}&s\end{array}\right)$, recalling that $r,s\in\R$. Since $w\bar w=\bar ww$ for all $w\in C$ we obtain
\[
\det(y\cdot x) 
\hspace{1mm}=\hspace{1mm} \det\left( 
    \begin{array}{cc}
    r & r\bar{a}+z \\
    ra+\bar{z} & ra\bar{a}+\bar{z}\bar{a}+az+s \end{array}\right) 
\hspace{1mm}=\hspace{1mm} rs-\bar{z}z \hspace{1mm}=\hspace{1mm} \det x 
\qedhere\]
\end{proof}

It follows that there is a homomorphism of connected Lie groups
\[
\psi\vcc \SL_2(C)\longrightarrow \mathrm{SO}_0(q+1,p+1)
\]
In order to analyse $\psi$, we describe a basis of $\msl_2(C)$, but first a remark.

\begin{remark}\label{overcomplex}
If $\F=\C$ then the same argument as the one above gives a homomorphism $\SL_2(\C)\rightarrow\mathrm{SO}(d+2,\C)$.
\end{remark}

\begin{lemma}\label{basislemma}
$\msl_2(C)$ is based by the set
\[
\Big\{ L_{E_{12}}, \hspace{0.5mm} L_{E_{21}}, \hspace{0.5mm} [L_{E_{12}},L_{E_{21}}], \hspace{0.5mm} \alpha_i=L_{e_iE_{12}}, \hspace{0.5mm} \beta_i=L_{e_iE_{21}}, \hspace{0.5mm} \gamma_i=[L_{E_{12}},\beta_i], \hspace{0.5mm} \varepsilon_{ij}=[\alpha_i,\beta_j] : i<j\Big\}
\] 
In particular, $\dim\SL_2(C)=3+3(d-1)+\frac{(d-1)(d-2)}{2}=\frac{(d+1)(d+2)}{2}$.
\end{lemma}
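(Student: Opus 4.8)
The plan is to make every generator and bracket in Definition~\ref{generaldefinition} completely explicit as a matrix over $\End(C)$. Since left multiplication by $aE_{ij}$ acts columnwise on $M_2(C)$, it suffices to record these operators on a single column $C^2$, where, writing $\mathrm{id}$ for the identity of $C$, the generators become $L_{E_{12}}=\left(\begin{smallmatrix}0&\mathrm{id}\\0&0\end{smallmatrix}\right)$, $L_{E_{21}}=\left(\begin{smallmatrix}0&0\\\mathrm{id}&0\end{smallmatrix}\right)$, $\alpha_i=\left(\begin{smallmatrix}0&L_{e_i}\\0&0\end{smallmatrix}\right)$ and $\beta_i=\left(\begin{smallmatrix}0&0\\L_{e_i}&0\end{smallmatrix}\right)$. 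Multiplying blocks gives $[L_{E_{12}},L_{E_{21}}]=\left(\begin{smallmatrix}\mathrm{id}&0\\0&-\mathrm{id}\end{smallmatrix}\right)$, $\gamma_i=\left(\begin{smallmatrix}L_{e_i}&0\\0&-L_{e_i}\end{smallmatrix}\right)$ and $\varepsilon_{ij}=\left(\begin{smallmatrix}L_{e_i}L_{e_j}&0\\0&-L_{e_j}L_{e_i}\end{smallmatrix}\right)$; the observation that makes the whole lemma run is that \eqref{anticommutativity} turns the last of these, for $i<j$, into the \emph{scalar} operator $\left(\begin{smallmatrix}L_{e_i}L_{e_j}&0\\0&L_{e_i}L_{e_j}\end{smallmatrix}\right)$. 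The proposed set thus falls into three families by block position: strictly upper ($L_{E_{12}},\alpha_i$), strictly lower ($L_{E_{21}},\beta_i$), and diagonal, the diagonal ones splitting further into skew diagonals (blocks $R$ and $-R$) and scalar diagonals (blocks $S$ and $S$).

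Every listed element is a generator or an iterated bracket of generators, so lies in $\msl_2(C)$; for the converse I would show that the linear span $\mathcal S$ of the set is closed under the bracket, so that $\mathcal S$ is a subalgebra containing the generators and hence contains $\msl_2(C)$. Closure I would verify family by family, each bracket reducing to an operation on blocks, which live in $\mathrm{span}\{\mathrm{id},L_{e_1},\dots,L_{e_{d-1}}\}$ or are pairwise products of the $L_{e_i}$. The upper and the lower family are each abelian; an upper against a lower gives a diagonal pair $PQ,-QP$ that \eqref{squaresto} and \eqref{anticommutativity} collapse to a skew or scalar diagonal; a skew diagonal against an upper or lower gives $RP+PR$ in a single off-diagonal block, kept inside $\mathrm{span}\{\mathrm{id},L_{e_i}\}$ by the same relations; and brackets among the skew diagonals and $[L_{E_{12}},L_{E_{21}}]$ stay diagonal of degree at most two. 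The only brackets in danger of leaving $\mathcal S$ are those with a scalar diagonal $\varepsilon_{ij}$: against a degree-one block it nominally produces the length-three operator $L_{e_i}L_{e_j}L_{e_k}-L_{e_k}L_{e_i}L_{e_j}$, and against another $\varepsilon_{kl}$ a length-four one. The heart of the proof, and what I expect to be the main obstacle, is to show using \eqref{anticommutativity} that these collapse: the length-three expression vanishes for $k\notin\{i,j\}$ and is a multiple of one $L_{e_m}$ otherwise, while the length-four expression vanishes unless $\{i,j\}$ and $\{k,l\}$ meet in exactly one index, when it is again a scalar diagonal. Hence no higher-degree operator survives, $\mathcal S$ is a subalgebra, and $\mathcal S=\msl_2(C)$.

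For linear independence I would work block by block. A vanishing combination forces in each off-diagonal block a relation $a\,\mathrm{id}+\sum_ib_iL_{e_i}=0$; evaluating at $1$ and using that $1,e_1,\dots,e_{d-1}$ is a basis of $C$ kills all these coefficients. In the diagonal blocks, adding and subtracting the two entries separates a skew part $g\,\mathrm{id}+\sum_ih_iL_{e_i}$, handled as before, from a scalar part $\sum_{i<j}k_{ij}L_{e_i}L_{e_j}$, so the whole statement reduces to the independence of $\{L_{e_i}L_{e_j}:i<j\}$ in $\End(C)$. This is the one genuinely nonassociative subtlety, since evaluation at $1$ no longer suffices: several of the products $e_ie_j$ coincide up to sign in $\OO$. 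Instead I would use the pairing $\langle X,Y\rangle=\tr(XY^*)$, where $*$ is the adjoint for the form; since conjugation is the adjoint of left multiplication, $L_z^*=L_{\bar z}$, and in particular $L_{e_i}^*=-L_{e_i}$. Then $\langle L_{e_i}L_{e_j},L_{e_k}L_{e_l}\rangle=-\tr(L_{e_i}L_{e_j}L_{e_k}L_{e_l})$, and \eqref{squaresto} and \eqref{anticommutativity} let one rewrite the product inside as $\pm$ a product of distinct $L_{e_m}$ of even length; such a product, if nonempty, anticommutes with each of its factors and is therefore traceless, whereas for $\{i,j\}=\{k,l\}$ it collapses to $\pm\,\mathrm{id}$ with trace $\pm d\neq0$. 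Hence the Gram matrix of $\{L_{e_i}L_{e_j}\}$ is diagonal and invertible, giving independence.

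Granting all of this, the dimension count is immediate: the three elements on the left, three families of size $d-1$, and $\binom{d-1}{2}$ operators $\varepsilon_{ij}$ give $3+3(d-1)+\tfrac{(d-1)(d-2)}{2}=\tfrac{(d+1)(d+2)}{2}$. The recurring difficulty is exactly the tension between nonassociativity and the bracket, and in both the closure of $\mathcal S$ and the vanishing of the off-diagonal Gram entries it is relation \eqref{anticommutativity} that resolves it.
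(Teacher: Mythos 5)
Your argument is correct, and the spanning half rests on the same computations as the paper's: both proofs come down to the fact that identities \eqref{squaresto} and \eqref{anticommutativity} collapse the length-three operators (your $L_{e_i}L_{e_j}L_{e_k}-L_{e_k}L_{e_i}L_{e_j}$, the paper's $L_{e_i}L_{e_j}L_{e_k}+L_{e_k}L_{e_j}L_{e_i}$) by the same three-way case split on repeated indices. The organization differs in two ways. For spanning, the paper argues by induction on the length of Lie monomials: once length-three products are shown to be spanned by generators, every longer (left-normed) monomial reduces, so it never has to touch a length-four expression; you instead verify directly that the span $\mathcal S$ is a subalgebra, which forces you to handle the extra bracket $[\varepsilon_{ij},\varepsilon_{kl}]$ --- your analysis of it (vanishing unless the index pairs meet in exactly one element, scalar diagonal otherwise) is correct, but it is work the paper's induction avoids. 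For linear independence, you supply something the paper only asserts: the paper says the set ``bases'' the length-$\leq 2$ span and attributes this to \eqref{squaresto} and \eqref{anticommutativity} without detail, whereas you reduce to independence of $\{L_{e_i}L_{e_j}:i<j\}$ in $\End(C)$ and prove it via the pairing $\tr(XY^*)$, using $L_z^*=L_{\bar z}$ and the observation that a nonempty even-length product of distinct $L_{e_m}$ anticommutes with each factor and is hence traceless. That Gram-matrix argument is sound (the diagonal entries are $\pm d\neq 0$) and is a genuine addition; it makes explicit the one point where ``evaluate at $1$'' fails, since distinct products $e_ie_j$ can coincide up to sign in $\OO$ even though the operators $L_{e_i}L_{e_j}$ do not.
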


\begin{proof}
It follows from identities \eqref{squaresto} and \eqref{anticommutativity} that the set in question bases the subspace spanned by products of length at most two, so it suffices to show that this is the whole of $\msl_2(C)$. Products of length three are spanned by generators and elements $\delta$ and $\delta^T$, where \[
\delta=\left(\begin{array}{cc}
0 & L_{e_i}L_{e_j}L_{e_k}+L_{e_k}L_{e_j}L_{e_i} \\
0 & 0 \end{array} \right)
\]
There are three cases for $\delta$, depending on the choice of $e_i,e_j,$ and $e_k$. \\[1mm]
\underline{Case 1:} $e_j$ is equal to either $e_i$ or $e_k$. Then $\delta$ is a generator by identity~\eqref{squaresto}. \\[1mm]
\underline{Case 2:} $e_i=e_k\neq e_j$. Using both identities \eqref{squaresto} and \eqref{anticommutativity} we see that $\delta$ is a generator. \\[1mm]
\underline{Case 3:} $e_i, e_j, e_k$ are distinct. Then $\delta=0$ by identity \eqref{anticommutativity}. \\[1mm]
Thus products of length three are spanned by generators, which completes the proof.
\end{proof}

\begin{lemma}\label{trivialkernel}
$\ker d\psi=0$
\end{lemma}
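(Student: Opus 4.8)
The plan is to prove injectivity of the differential $d\psi\colon\msl_2(C)\to\mathfrak{so}(q+1,p+1)$ directly, by evaluating it on the basis furnished by Lemma~\ref{basislemma} and showing the images are linearly independent. It is worth recording first that the dimensions match: with $n=(q+1)+(p+1)=d+2$ we have $\dim\mathfrak{so}(q+1,p+1)=\binom{d+2}{2}=\frac{(d+1)(d+2)}{2}$, which by Lemma~\ref{basislemma} equals $\dim\msl_2(C)$. Thus $d\psi$ is injective precisely when it is an isomorphism, so proving $\ker d\psi=0$ also identifies $\psi$ as a local isomorphism.

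Differentiating the action of the generator $L_{I+taE_{ij}}$ at $t=0$ gives the formula
\[
d\psi(L_{aE_{ij}})(x)=(aE_{ij})x+x(\bar aE_{ji}),
\]
in which each product is unambiguous since one factor has a single nonzero entry. Because $\psi$ is a homomorphism of Lie groups, $d\psi$ is a homomorphism of Lie algebras, so $d\psi([X,Y])=[d\psi(X),d\psi(Y)]$. I therefore only need to apply the displayed formula to the non-bracket generators $L_{E_{12}},L_{E_{21}},\alpha_i,\beta_i$, and then recover $d\psi$ on $[L_{E_{12}},L_{E_{21}}]$, $\gamma_i$ and $\varepsilon_{ij}$ by commuting the resulting endomorphisms of $\mfh_2(C)$.

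Performing these computations in the coordinates $\bigl(\tfrac{r+s}{2},\tfrac{r-s}{2},z_0,\dots,z_{d-1}\bigr)$ of the isometry $\mfh_2(C)\cong\R^{q+1,p+1}$, and writing $\epsilon_i=e_i^2=\pm1$, I expect each basis element to be supported on a small set of coordinates, with the bracket elements isolating individual coordinate $2$-planes. Concretely, $L_{E_{12}},L_{E_{21}}$ and $[L_{E_{12}},L_{E_{21}}]$ should act within the block spanned by $\tfrac{r\pm s}{2}$ and $z_0$; each $\alpha_i,\beta_i$ should couple $z_i$ to $\tfrac{r\pm s}{2}$; and I anticipate $d\psi(\gamma_i)$ coupling only the plane $\{z_0,z_i\}$ and $d\psi(\varepsilon_{ij})$ only the plane $\{z_i,z_j\}$. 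This matches the basis against the coordinate $2$-planes bijectively, the three families contributing respectively the $\binom{3}{2}=3$ planes inside the block, the $3(d-1)$ planes joining the block to each $z_i$, and the $\binom{d-1}{2}$ planes among the $z_i$.

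With this structure in hand, linear independence reduces to a short check organised by these disjoint families of planes. The $\{z_i,z_j\}$ and $\{z_0,z_i\}$ planes are each reached by a single basis element, so they yield no relations; inside each pair $\{\alpha_i,\beta_i\}$ one verifies that a $2\times2$ coefficient determinant (carrying the signs $\epsilon_i$) is nonzero; and on the three-dimensional block one checks that $L_{E_{12}},L_{E_{21}},[L_{E_{12}},L_{E_{21}}]$ map to a basis of the $\mathfrak{so}$ of that quadratic subspace. The main obstacle is exactly this bookkeeping: confirming that the commutators $\gamma_i$ and $\varepsilon_{ij}$ genuinely localise to single planes, and tracking the signature signs $\epsilon_i$ carefully enough to see that the relevant determinants do not vanish. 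Once that is settled, $\ker d\psi=0$ follows at once.
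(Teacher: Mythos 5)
Your strategy is in substance the paper's own: compute the infinitesimal action $y\cdot x = yx+xy^{*}$ of each element of the basis from Lemma~\ref{basislemma} on $\mathfrak{h}_2(C)$ and check that the resulting operators are linearly independent (the paper does this by evaluating a vanishing combination on well-chosen test matrices $x$; you propose reading off the support of each image in the coordinates of $\R^{q+1,p+1}$, which is the same calculation organised differently and, if anything, slightly cleaner). Your anticipated structure is also correct: one can check that $L_{E_{12}},L_{E_{21}},[L_{E_{12}},L_{E_{21}}]$ land in the $\mathfrak{so}$ of the block $\{\tfrac{r+s}{2},\tfrac{r-s}{2},z_0\}$, that $\alpha_i,\beta_i$ couple $z_i$ to $\tfrac{r\pm s}{2}$ with an invertible $2\times 2$ coefficient matrix, that $\gamma_i$ localises to the plane $\{z_0,z_i\}$ (since $e_iz+ze_i=2z_0e_i+2e_i^2z_i$), and that $\varepsilon_{ij}$ localises to $\{z_i,z_j\}$, and the plane count $3+3(d-1)+\binom{d-1}{2}=\binom{d+2}{2}$ matches. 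The one reservation is that, as written, this is a proof plan rather than a proof: the statements you flag as ``I expect'' and ``I anticipate'' --- in particular the localisation of $\gamma_i$ and $\varepsilon_{ij}$ and the nonvanishing of the signed determinants --- are precisely the computational content of the paper's argument, and your proposal defers them rather than carrying them out. Since those verifications do go through exactly as you predict, there is no error of substance, but the proposal would need the explicit computations (the analogues of the paper's displays $(a)$--$(d)$) to count as complete.
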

\begin{proof}
The action of $\SL_2(C)$ on $\mfh_2(C)$ induces an action of $\msl_2(C)$: if $x\in\mfh_2(C)$ and $y$ is a generator of $\msl_2(C)$ then $y\cdot x=yx+xy^*$. By definition, any element of $\ker d\psi$ acts trivially, so we calculate the action of the basis of Lemma~\ref{basislemma} on an arbitrary $x=\left(\begin{array}{cc}r&z\\\bar{z}&s\end{array}\right)\in\mfh_2(C)$. Here $r,s\in\R$, $z=z_0+\sum_{i=1}^{d-1}z_ie_i\in C$, and below the $\lambda_i$ and $\kappa_{ij}$ are real. In several places we find it convenient to write $w=\lambda_0+\sum_{i=1}^{d-1}\lambda_ie_i$.
\begin{flalign*}
(a): \Big(\lambda_0L_{E_{12}}+\sum_{i=1}^{d-1}\lambda_i\alpha_i\Big)\cdot x 
	\hspace{2mm} &= \hspace{2mm} L_{wE_{12}}\cdot x\hspace{2mm} =\hspace{2mm} \left(\begin{array}{cc}2\real(w\bar{z})&sw\\s\bar{w}&0 \end{array}\right) &\\
(b): \Big(\lambda_0L_{E_{21}}+\sum_{i=1}^{d-1}\lambda_i\beta_i\Big)\cdot x
	\hspace{2.65mm}&=\hspace{2mm}L_{wE_{21}}\cdot x\hspace{2mm}=\hspace{2mm}\left(\begin{array}{cc} 0&r\bar{w}\\rw&2\real(wz)\end{array}\right) &
\end{flalign*}
\begin{flalign*}
(c): \Big(\lambda_0[L_{E_{12}},L_{E_{21}}]+\sum_{i=1}^{d-1}\lambda_i&\gamma_i\Big)\cdot x
	\hspace{2mm}=\hspace{2mm}[L_{E_{12}},L_{wE_{21}}]\cdot x &\\
		&=L_{E_{12}}\cdot\left(\begin{array}{cc} 0&r\bar{w}\\rw&2\real(wz)\end{array}\right)
		-L_{wE_{21}}\cdot\left(\begin{array}{cc}2\real(\bar{z})&s\\s&0 \end{array}\right) &\\
		&=2\left(\begin{array}{cc} r\lambda_0&wz-z\bar{w}\\
		\bar{z}\bar{w}-w\bar{z}&-s\lambda_0\end{array}\right) &
\end{flalign*}
\begin{flalign*}
(d): \Big(\sum_{0<i<j<d}&\kappa_{ij}\varepsilon_{ij}\Big)\cdot x
		\hspace{2mm}=\hspace{2mm}\sum_{i<j}\kappa_{ij}\left(L_{e_iE_{12}}\cdot(L_{e_jE_{21}}\cdot x)
		-L_{e_jE_{21}}\cdot(L_{e_iE_{12}}\cdot x)\right) &\\
	&\hspace{-1cm}=\sum_{i<j}\kappa_{ij}\bigg(\left(\begin{array}{cc}2\real(e_ire_j)&2\real(e_jz)e_i\\
		2\real(e_jz)\bar{e_i}&0 \end{array}\right) 
		-\left(\begin{array}{cc} 0&2\real(e_i\bar{z})\bar{e_j}\\
		2\real(e_i\bar{z})e_j&2\real(e_jse_i)\end{array}\right)\bigg) &\\
	&\hspace{-1cm}=\sum_{i<j}2\kappa_{ij}\left(\begin{array}{cc} 0&e_j^2z_je_i-e_i^2z_ie_j\\
		-e_j^2z_je_i+e_i^2z_ie_j&0 \end{array}\right) &
\end{flalign*}
Together, these describe the action of every element of $\msl_2(C)$ on $\mfh_2(C)$. Now assume that $y$ acts trivially. Writing $y$ in the basis of Lemma \ref{basislemma} and using the four equations above (with respective sets of coefficients $\lambda_i$, $\mu_i$, $\nu_i$, and $\kappa_{ij}$), consider $y\cdot x$. 

The upper-left entry is $2\real\big((\lambda_0+\sum_{i=1}^{d-1}\lambda_ie_i)\bar{z}\big)+2r\nu_0$, which must be zero for all $r\in\R$ and $z\in C$. Taking $z=0$, $r=1$ gives $\nu_0=0$, and then cycling $z$ through the $e_i$ gives $\lambda_i=0$. Similarly, considering the lower right entry gives $2\real\big((\mu_0+\sum\mu_ie_i)z\big)-2s\nu_0=0$. We already have $\nu_0=0$, and cycling $z$ through the $e_i$ gives $\mu_i=0$. Finally, considering the top right entry, we are left with
\[
\sum_{i=1}^{d-1}2(\nu_ie_iz-z\nu_i\bar{e_i})+\sum_{0<i<j<d}2\kappa_{ij}(e_j^2z_je_i-e_i^2z_ie_j)=0
\]
Taking $z=1$ gives $\sum_1^{d-1}4\nu_ie_i=0$, so all $\nu_i$ are zero. Then successively considering $z=e_1, z=e_2, \dots$ we find that all $\kappa_{1j}, \kappa_{2j},\dots$ are zero. Thus $\ker d\psi=0$.
\end{proof}

Knowing that $d\psi$ has full rank is enough to prove the main theorem of this section.

\begin{theorem}\label{theorem2d}
If $C$ is a unital real composition algebra of dimension $d$ and signature $(p,q)$ then $\SL_2(C)\cong \mathrm{Spin}(p+1,q+1)$.
\end{theorem}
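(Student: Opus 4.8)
The plan is to promote the homomorphism $\psi\vcc\SL_2(C)\to\mathrm{SO}_0(q+1,p+1)$ to a covering map and then to pin down which cover it is. First I would record the dimension count: since $p+q=d$, the target has dimension $\binom{d+2}{2}=\frac{(d+1)(d+2)}{2}$, which by Lemma~\ref{basislemma} is exactly $\dim\SL_2(C)$. Together with Lemma~\ref{trivialkernel}, which gives $\ker d\psi=0$, this forces $d\psi$ to be an isomorphism of Lie algebras. Hence $\psi$ is a local diffeomorphism at the identity, so its image is an open subgroup of the connected group $\mathrm{SO}_0(q+1,p+1)$; thus $\psi$ is surjective and is a covering homomorphism. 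Since $\SL_2(C)$ is generated by the one-parameter subgroups through its unipotent generators it is connected, so at this point $\SL_2(C)$ is exhibited as a connected cover of $\mathrm{SO}_0(q+1,p+1)$, and by definition $\mathrm{Spin}(p+1,q+1)\cong\mathrm{Spin}(q+1,p+1)$ is one particular such cover.

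The remaining task is to identify the cover, and this is where the work lies. Being a covering homomorphism of Lie groups, $\psi$ is a regular cover whose deck group is the central subgroup $\ker\psi$. The cover is nontrivial: the copy $\SL_2(\R)\subseteq\SL_2(C)$ obtained by taking $a\in\R$ contains $-I$, and $-I$ acts on $\mfh_2(C)$ by $x\mapsto(-I)x(-I)^*=x$, so $-I\in\ker\psi\setminus\{I\}$. Now $\mathrm{Spin}(q+1,p+1)$ is singled out among connected double covers as the one whose associated homomorphism $\pi_1\big(\mathrm{SO}_0(q+1,p+1)\big)\to\Z/2$ is nontrivial on every circle that rotates a definite coordinate $2$-plane, these circles generating $\pi_1$ inside the maximal compact $\mathrm{SO}(q+1)\times\mathrm{SO}(p+1)$. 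So I would aim to show that the deck homomorphism of $\psi$ coincides with this spin homomorphism.

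To that end I would lift the generating loops. The model calculation is the circle $\theta\mapsto L_{R(\theta)}$ with $R(\theta)=\left(\begin{smallmatrix}\cos\theta&\sin\theta\\-\sin\theta&\cos\theta\end{smallmatrix}\right)\in\SL_2(\R)$: since $R$ is real, $R(\theta)^*=R(\theta)^{-1}$, and conjugation $x\mapsto R(\theta)xR(\theta)^{-1}$ rotates the $(v,z_0)$-plane of $\mfh_2(C)$ through angle $2\theta$ while fixing the other coordinates. Thus $\psi$ carries this period-$2\pi$ circle onto a definite-plane rotation traversed twice, so the lift of the generator ends at $L_{R(\pi)}=-I$; the deck value of that generator is the nontrivial element, as required. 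Using the explicit actions behind Lemma~\ref{trivialkernel} I would exponentiate a compact rotation generator in a like-signed $(z_i,z_j)$-plane (e.g. arising from $\varepsilon_{ij}$ when $e_i^2=e_j^2$) to produce the analogous period-doubling lift for a plane of the other signature, and then invoke conjugacy of all definite $2$-planes of a given type to conclude that the deck homomorphism equals the spin homomorphism on all generators. This gives $\ker\psi\cong\Z/2$ and $\SL_2(C)\cong\mathrm{Spin}(p+1,q+1)$.

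The step I expect to be the main obstacle is this last identification with $\mathrm{Spin}$ rather than with a competing connected cover. For the division algebras $\C,\HH,\OO$ the base $\mathrm{SO}_0(d+1,1)$ has $\pi_1=\Z/2$, so the connected double cover is unique and the nontriviality of $\psi$ already finishes the argument; the genuine difficulty is the split algebras, where $\pi_1$ is $\Z^2$ or $(\Z/2)^2$ and three connected double covers must be distinguished. There one really must locate period-doubling lifts for rotation circles in both the positive-definite and the negative-definite blocks, and the delicate bookkeeping is keeping the signature conventions in the isometry $\mfh_2(C)\cong\R^{q+1,p+1}$ straight while deciding which exponentiated generators give compact rotations and in which definite plane.
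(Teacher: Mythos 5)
Your first half coincides with the paper's: the dimension count from Lemma~\ref{basislemma} together with $\ker d\psi=0$ makes $\psi$ a surjective covering homomorphism of connected groups, and $L_{-I}$ (which the paper realises explicitly as $\iota=L_aL_bL_cL_bL_a$ for real elementary matrices $a,b,c$) is a nontrivial element of $\ker\psi$. Where you genuinely diverge is in pinning down the cover. The paper argues case by case: for associative $C$ it identifies elements of $\SL_2(C)$ with matrices and computes $\ker\psi=\{1,\iota\}$ directly from the action on diagonal and off-diagonal hermitian matrices; for $\OO$ it invokes $\pi_1(\mathrm{SO}(9,1))=\Z_2$; for $\OO'$ it complexifies and compares with $\SL_2(\C\otimes\OO)\to\mathrm{SO}(10,\C)$. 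You instead propose a uniform topological argument: lift the generating circles of $\pi_1\big(\mathrm{SO}_0(q+1,p+1)\big)$, show each lift ends at $\pm I$ (which simultaneously proves $\ker\psi=\{I,-I\}$, since the endpoints of such lifts generate the kernel), and check that the resulting character $\pi_1\to\Z/2$ is the spin character. This is a legitimate route, and in one respect sharper: for the split algebras the target has $\pi_1\cong\Z^2$ or $(\Z/2)^2$, so ``connected double cover'' does not by itself single out $\mathrm{Spin}$ --- a point that the paper's reduction to ``onto with two-point kernel'' passes over and that your argument confronts directly.

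The soft spot is that the decisive computations are only sketched. Your model calculation for $R(\theta)$ is correct (conjugation rotates the negative-definite $(v,z_0)$-plane through $2\theta$, so the lift ends at $-I$), but for the split cases you must also produce a period-doubling circle over a definite plane of the opposite type, and the recipe you give --- exponentiate $\varepsilon_{ij}$ with $e_i^2=e_j^2$ --- does not apply to $C=\R^2$, which has a single imaginary unit and hence no $\varepsilon_{ij}$ at all; there the second circle lies over a plane of the form $(u,z_1)$ and is generated by $\alpha_1-\beta_1$ rather than by any $\varepsilon_{ij}$. For $M_2(\R)$ and $\OO'$ your recipe does work, via $(L_{e_i}L_{e_j})^2=-e_i^2e_j^2=-1$ and the factor of $2$ in computation $(d)$ of Lemma~\ref{trivialkernel}, but these verifications, and the bookkeeping matching each circle to the correct factor of the maximal compact, still have to be carried out for the argument to close. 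As written, the proposal is a sound strategy whose key identifications are asserted rather than proved.
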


\begin{proof}
Because Spin$(p+1,q+1)\cong$ Spin$(q+1,p+1)$ it suffices to show that $\psi$ is onto and has two-point kernel. By Lemma \ref{basislemma} we have $\dim\SL_2(C)=\frac{(d+1)(d+2)}{2}=\dim\mathrm{SO}_0(q+1,p+1)$, so by Lemma~\ref{trivialkernel} $d\psi$ is onto. Hence $\psi$ is onto. Indeed,
\[
\psi(\SL_2(C))=\exp(d\psi(\msl_2(C)))=\exp(\mathfrak{so}(q+1,p+1))=\mathrm{SO}_0(q+1,p+1)
\]

It remains to show that $\psi$ has two-point kernel. Consider the real matrices
\[
a=\left(\begin{array}{cc}1&-1\\ 0&1\end{array}\right),
\hspace{5mm}b=\left(\begin{array}{cc}1&0 \\1&1\end{array}\right),
\hspace{5mm}c=\left(\begin{array}{cc}1&-2\\0&1\end{array}\right)
\]
and the linear map $\iota=L_aL_bL_cL_bL_a\in\SL_2(C)$, which acts as $-I$ on $C^2$. Because $a,b,c\in M_2(\R)$, the expression for $\iota\cdot x$ associates, so $\iota\cdot x=(-I)x(-I)=x$, and $\iota\in\ker\psi$. Hence $\ker\psi$ consists of at least two elements.
	
\begin{claim}{}
If $C$ is associative then $\ker\psi=\{1,\iota\}$.
\end{claim}
\begin{claimproof}{}
As in the proof of Theorem \ref{associativecase} we can consider elements of $\SL_2(C)$ to be matrices. Then $\iota=-I$. Also, if $y=(y_{ij})\in\SL_2(C)$ acts trivially on $\mfh_2(C)$ we have 
\[
\left(\begin{array}{cc}r&0 \\ 0&s\end{array}\right)=
y\cdot\left(\begin{array}{cc}r&0 \\0&s\end{array}\right) 
=\left(\begin{array}{cc}		ry_{11}\overline{y_{11}}+sy_{12}\overline{y_{12}} \hspace{3mm}
&ry_{11}\overline{y_{21}}+sy_{12}\overline{y_{22}}\\
ry_{21}\overline{y_{11}}+sy_{22}\overline{y_{12}} \hspace{3mm}
&ry_{21}\overline{y_{21}}+sy_{22}\overline{y_{22}}
\end{array}\right)	
\]
Taking $r=1, s=0$ in this gives $y_{11}\overline{y_{11}}=1$ and $y_{21}=0$. Similarly, taking $r=0, s=1$ gives $y_{22}\overline{y_{22}}=1$ and $y_{12}=0$. Now we have 
\[
\left(\begin{array}{cc} 0&z\\\bar{z}&0\end{array}\right)
    =\left(\begin{array}{cc}y_{11}&0 \\ 0&y_{22}\end{array}\right)
        \cdot\left(\begin{array}{cc}0&z\\\bar{z}&0\end{array}\right)
    =\left(\begin{array}{cc} 0& y_{11}z\overline{y_{22}} \\ y_{22}\overline{z}\hspace{0.2mm}\overline{y_{11}} & 0
\end{array}\right)
\]
Taking $z=1$ gives $y_{11}\overline{y_{22}}=1$. But $y_{11}\overline{y_{11}}=1$, so $y_{22}=y_{11}$. From this we get $z=y_{11}z\overline{y_{11}}$, so $zy_{11}=y_{11}z$ for all $z\in C$. Hence $y_{11}=y_{22}=\pm1$, and thus $y\in\{1,\iota\}$.
\end{claimproof}
	
For the case $C=~\OO$, note that since $\pi_1(\mathrm{SO}(9,1))=\Z_2$ \cite[pp.335,~343]{hall}, any proper cover is a double cover.
	
This just leaves the case $C=\OO'$. The complexification $\C\otimes\OO'$ is isomorphic to the bioctonions $\C\otimes\OO$, a unital complex composition algebra. Thus $\SL_2(\OO')$ is a real form of $\SL_2(\C\otimes\OO)$, which covers $\mathrm{SO}(10,\C)$ by Remark~\ref{overcomplex}. Since $\pi_1(\mathrm{SO}(10,\C))=\Z_2$ \cite[p.343]{hall}, the covering is 2:1, with kernel $\{1,\iota\}$. We  thus have a commutative diagram
\[
\begin{tikzcd}[column sep=huge]
\SL_2(\OO') \arrow[r, "\mathrm{complexify}"] \arrow[d, "\psi"]
& \SL_2(\C\otimes\OO) \arrow[d, "2:1"] \\
	\mathrm{SO}_0(5,5) \arrow[r, "\mathrm{complexify}"] 
& \mathrm{SO}(10,\C)
\end{tikzcd}
\]
which completes the proof in the final case, $C=\OO'$.
\end{proof}

\section{The General Case}

Let $R$ be a commutative associative unital ring, and $A$ a finite dimensional $R$--algebra that is free as an $R$--module, with basis $\{e_1,\dots,e_n\}$. For example, $A$ could be any finite dimensional algebra over a field. 

Write $\mfM_A$ for the left multiplication algebra of $A$. That is, $\mfM_A$ is the $R$--algebra generated by $\{L_{re_i}:r\in R\}$. Since $A$ is free over $R$ we have $\mfM_A\subset\End_R(R^n)=M_n(R)$.

\begin{theorem}\label{mgeq3theorem}
Under the above assumptions, if $m\geq3$ then $\msl_m(A)\cong\big\{x\in M_m(\mfM_A) \vcc \tr x\in [\mfM_A,\mfM_A] \big\}$. In particular $\msl_m(A)\subset\msl_{mn}(R)$.
\end{theorem}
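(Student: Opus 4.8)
The plan is to mirror the proof of Theorem~\ref{associativecase}, but now working inside $M_m(\mfM_A)$ rather than $M_m(R)$ directly. The key structural observation is that, because $A$ is free over $R$ with basis $\{e_1,\dots,e_n\}$, left multiplication by a matrix entry $ae_i$ (with $a\in R$) is an $R$-linear endomorphism of $A$, so each generator $L_{ae_iE_{jk}}$ of $\msl_m(A)$ should be identified with the matrix in $M_m(\mfM_A)$ whose only nonzero block is $L_{ae_i}\in\mfM_A$ in position $(j,k)$. First I would make this identification precise: I would show that the generators $\{L_{ae_iE_{jk}}:a\in R,\ i,\ j\neq k\}$, viewed inside $\End_R((R^n)^m)=M_{mn}(R)$, carry the block structure just described, and that commutators of such block matrices behave exactly like commutators in $M_m(\mfM_A)$. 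This is the step that establishes the inclusion $\msl_m(A)\subset\msl_{mn}(R)$ and reduces the nonassociative problem to an associative computation in $\mfM_A$.

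Once the generators sit inside $M_m(\mfM_A)$, I would run the commutator bookkeeping from Theorem~\ref{associativecase}. The span of the off-diagonal generators gives all matrices in $M_m(\mfM_A)$ with zero diagonal blocks, and the bracket $[L_{\phi}E_{jk},L_{\psi}E_{lp}]=\delta_{kl}L_{\phi}L_{\psi}E_{jp}-\delta_{jp}L_{\psi}L_{\phi}E_{lk}$ produces, in the coincident-index case, diagonal blocks of the form $L_{\phi}L_{\psi}E_{jj}-L_{\psi}L_{\phi}E_{kk}$, whose block-trace lies in $[\mfM_A,\mfM_A]$. The hypothesis $m\geq3$ is exactly what guarantees we can realise \emph{every} commutator $[\phi,\psi]$ on a single diagonal block while keeping the others free: with three or more rows and columns we can route the two generators through a third index $l\notin\{j,k\}$, producing $[\phi,\psi]E_{jj}$ with no forced compensating entry elsewhere, so the diagonal blocks can be adjusted independently subject only to the block-trace constraint. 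I would then check, as in the associative case, that all higher commutators stay within the claimed set (their block-traces remain in $[\mfM_A,\mfM_A]$), giving the reverse containment and hence the isomorphism.

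The main obstacle I anticipate is verifying that $\mfM_A$ itself is generated as needed and that the generators really do close up to give \emph{all} of $M_m(\mfM_A)$ subject to the trace condition, rather than some smaller subalgebra. Concretely, the off-diagonal blocks must be allowed to range over all of $\mfM_A$, not merely over the left-multiplication operators $L_{ae_i}$; this requires observing that products $L_{\phi}L_{\psi}$ arising from commutators populate the off-diagonal blocks with arbitrary elements of $\mfM_A$ (using the third index available when $m\geq3$), so that the $R$-algebra generated by the $L_{ae_i}$ is fully recovered block-entrywise. I would handle this by explicitly composing brackets that move a product $L_{\phi}L_{\psi}\in\mfM_A$ into an off-diagonal slot via a chain of three distinct indices, confirming that the generated ring contains every matrix with off-diagonal blocks in $\mfM_A$ and diagonal blocks constrained only by $\tr x\in[\mfM_A,\mfM_A]$.
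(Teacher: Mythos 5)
Your proposal is correct and follows essentially the same route as the paper: identify the generators $L_{aE_{ij}}$ with block matrices over the left multiplication algebra $\mfM_A$, compute the two nonzero commutator patterns, and use a third index (available since $m\geq3$) to fill every off-diagonal block with arbitrary products in $\mfM_A$ while the diagonal blocks are constrained only by the block-trace lying in $[\mfM_A,\mfM_A]$. The paper's proof is terser but contains exactly the same ideas, including the reduction to the associative bookkeeping of Theorem~\ref{associativecase}.
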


\begin{proof}
The nonzero products of two generators are:
\[
[L_{aE_{ij}},L_{bE_{ji}}]=L_{L_aL_bE_{ii}}-L_{L_bL_aE_{jj}}
\]
\[
[L_{aE_{ij}},L_{bE_{jk}}]=L_{L_aL_bE_{ik}}
\]
Since $m\geq3$, it follows by taking successive products that $\msl_m(A)$ is the span of the set 
\[
\Big\{ L_{\alpha E_{ij}} \vcc \alpha\in \mfM_A,\textrm{ } i\neq j \Big\}
\bigcup
\Big\{ L_{\alpha\beta E_{ii}}-L_{\beta\alpha E_{jj}} \vcc \alpha,\beta\in \mfM_A, \textrm{ } i\neq j \Big\}
\]
Clearly all such matrices have trace in $[\mfM_A,\mfM_A]$, and varying $\alpha$ and $\beta$ gives the whole of $[\mfM_A,\mfM_A]$. Varying $i$ and $j$ then gives the result.
\end{proof}

This reduces the problem of determining $\msl_m(A)$ to that of finding $\mfM_A$. In the case of the $\R$-algebra $\OO$, the group generated by left multiplications by units is $\mathrm{SO}(8)$ \cite[p.92]{conwaysmith}. This $\R$-spans the full matrix algebra $M_8(\R)$, so $\mfM_\OO=M_8(\R)$. We can thus calculate Baez's groups:

\begin{corollary}\label{baezgroups}
	If $m\geq3$ then $\SL_m(\OO)\cong\SL_{8m}(\R)$.
\end{corollary}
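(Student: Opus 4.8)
The plan is to prove the sharper statement that, viewed inside $\mathrm{GL}_{8m}(\R)$, the two groups are literally equal, which of course yields the claimed isomorphism. First I would make the ambient identification explicit. The generators $L_{I+aE_{ij}}$ of $\SL_m(\OO)$ act $\R$-linearly on $\OO^m\cong\R^{8m}$, and writing $\R^{8m}$ as the space of columns of $M_m(M_8(\R))$, such a generator becomes the block transvection $I+L_aE_{ij}$, where now $E_{ij}$ is the $m\times m$ matrix unit and the off-diagonal block $L_a$ ranges over the image of $\OO$ in $\mfM_\OO=M_8(\R)$. Each such matrix is block-triangular with identity diagonal blocks, so it has determinant $1$; hence $\SL_m(\OO)\subseteq\SL_{8m}(\R)$ immediately, and the entire content of the corollary is the reverse inclusion.

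For the reverse inclusion I would work throughout at the level of these block transvections, reusing the multiplicative structure underlying Theorem~\ref{mgeq3theorem}. The group commutator of two generators satisfies $[I+L_aE_{ij},\,I+L_bE_{jk}]=I+L_aL_bE_{ik}$ for distinct $i,j,k$, exactly mirroring the Lie bracket computed there. Because $m\geq3$ there are three block indices available, and I would iterate this identity, ping-ponging an off-diagonal block between positions $(1,2)$ and $(1,3)$ by commuting it alternately against $E_{23}$ and $E_{32}$, to realise $I+(L_{a_1}\cdots L_{a_k})E_{ij}$ for products of arbitrary length; the key point is that in each such commutator the reverse term vanishes, so the bookkeeping stays clean. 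Since $L_{ta}=tL_a$ supplies real scalar multiples, and since block transvections in a fixed position $(i,j)$ multiply by adding their off-diagonal blocks, the additive group of achievable blocks contains the real span of all products of left multiplications, which by the definition of $\mfM_\OO$ is all of $M_8(\R)$. Thus $I+NE_{ij}\in\SL_m(\OO)$ for every $N\in M_8(\R)$ and every $i\neq j$.

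It then remains to see that these block transvections generate $\SL_{8m}(\R)$. I would invoke the standard fact that $\SL_{8m}(\R)$ is generated by the elementary transvections $I+t\,\epsilon_{pq}$ with $p\neq q$ (where $\epsilon_{pq}$ are the matrix units of $M_{8m}(\R)$), and check that each lies in $\SL_m(\OO)$. Writing every index as a pair (block, slot) and letting $u_{\alpha\beta}$ denote the matrix units of $M_8(\R)$, a cross-block transvection is $I+t\,u_{\alpha\beta}E_{ik}$ with $i\neq k$, which is one of the block transvections already obtained. A within-block transvection $I+t\,u_{\alpha\beta}E_{ii}$ with $\alpha\neq\beta$ I would produce as a single commutator $[\,I+u_{\alpha\gamma}E_{il},\,I+t\,u_{\gamma\beta}E_{li}\,]$ through any auxiliary block $l\neq i$: here $u_{\gamma\beta}u_{\alpha\gamma}=0$ because $\alpha\neq\beta$, so the unwanted contribution in the diagonal block $(l,l)$ drops out and the commutator collapses to $I+t\,u_{\alpha\beta}E_{ii}$. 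This gives $\SL_{8m}(\R)\subseteq\SL_m(\OO)$, and combined with the first inclusion yields equality.

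The main obstacle is the middle step: showing that the achievable off-diagonal blocks exhaust $M_8(\R)$ rather than merely the $8$-dimensional image $\{L_a:a\in\OO\}$. This is precisely where nonassociativity is felt, since products $L_aL_b$ are genuinely needed and are not themselves of the form $L_c$, and it is also where the hypothesis $m\geq3$ is essential, as the clean chaining of commutators requires a third block index, exactly as in Theorem~\ref{mgeq3theorem}. One could instead deduce the isomorphism abstractly by identifying $\msl_m(\OO)\cong\msl_{8m}(\R)$ and appealing to the uniqueness of the connected Lie subgroup of $\mathrm{GL}_{8m}(\R)$ with a prescribed Lie subalgebra, but the direct computation with generators is self-contained and delivers the stronger conclusion that $\SL_m(\OO)$ and $\SL_{8m}(\R)$ coincide as subgroups of $\mathrm{GL}_{8m}(\R)$.
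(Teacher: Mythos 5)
Your proof is correct, but it takes a genuinely different route from the paper's. The paper deduces the corollary in one line from its Lie-algebra machinery: Theorem~\ref{mgeq3theorem} together with $\mfM_\OO=M_8(\R)$ identifies $\msl_m(\OO)$, concretely as a subalgebra of $\mathfrak{gl}_{8m}(\R)$, with $\big\{x\in M_m(M_8(\R)) : \tr x\in[M_8(\R),M_8(\R)]\big\}=\msl_{8m}(\R)$, and then simply exponentiates --- exactly the abstract alternative you mention in your last paragraph. You instead work entirely at the group level: after the block identification $L_{I+aE_{ij}}\mapsto I+L_aE_{ij}$, you use the Steinberg-type relation $[I+AE_{ij},I+BE_{jk}]=I+AB\,E_{ik}$ for three distinct indices (which is precisely where $m\geq3$ enters), additivity of transvections in a fixed block position, and $\mfM_\OO=M_8(\R)$ to realise every $I+NE_{ij}$ with $N\in M_8(\R)$; you then descend to elementary transvections of $M_{8m}(\R)$. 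The one step in your argument needing care is the within-block case: the general commutator $[I+AE_{il},I+BE_{li}]$ does not collapse to $I+AB\,E_{ii}$ but carries extra terms ($-BA\,E_{ll}$, $-ABA\,E_{il}$, $BAB\,E_{li}$, $ABAB\,E_{ii}$); however, all of these vanish when $BA=0$, which your choice $A=u_{\alpha\gamma}$, $B=t\,u_{\gamma\beta}$ with $\alpha\neq\beta$ guarantees, so the identity you assert checks out. What your approach buys: it is self-contained and purely algebraic --- no exponential map, no appeal to uniqueness of the connected subgroup with a prescribed subalgebra --- it works directly from Definition~\ref{generaldefinition} rather than through its equivalence with Baez's Definition~\ref{baezdefinition}, and it delivers the sharper conclusion that $\SL_m(\OO)$ and $\SL_{8m}(\R)$ coincide as subgroups of $\mathrm{GL}_{8m}(\R)$ (and, as you note, such a transvection argument would survive in settings where exponentiation is unavailable). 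What the paper's route buys is brevity: with Theorem~\ref{mgeq3theorem} already proved, the corollary is immediate, and the same one-line argument transfers verbatim to $\OO'$. Both arguments ultimately rest on the same essential input, namely $\mfM_\OO=M_8(\R)$.
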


\begin{proof}
Theorem~\ref{mgeq3theorem} gives $\msl_m(\OO)\cong\msl_{8m}(\R)$. Exponentiating gives the result.
\end{proof}

Together with Theorem \ref{theorem2d} this describes all the groups Baez defined. In fact, the same argument works for $\OO'$, and we similarly obtain $\SL_m(\OO')\cong\SL_{8m}(\R)$ for $m\geq3$.

\begin{remark}
Corollary~\ref{baezgroups} shows that Baez's definition of $\SL_3(\OO)$ disagrees with Sudbery's, which gives a real form of $E_6$. For $m=2,3$, Sudbery defines $\msl_m(\OO)$ to be the left multiplications by traceless elements of $\mfh_m(\OO)$, together with its derivations. The isomorphism with $\mathfrak{e}_6$ is due to Chevalley and Schafer \cite{chevalleyschafer}. It is natural to ask what happens to this construction when $n$ is greater than 3. In this case, it is shown in \cite[Thm.~3.3]{petyt} that the derivation algebra of $\mfh_m(\OO)$ is $\mathfrak{g}_2\oplus\mathfrak{so}_m$, but it is not clear how this interacts with the multiplication operators. In particular, when $m>3$ the commutator of two such operators may fail to be a derivation.  
\end{remark}

\bibliographystyle{alpha}
\bibliography{bibtex}

\end{document}